\documentclass{amsart}
\usepackage{amssymb,enumerate}
           
\usepackage{tikz-cd}
\usepackage{hyperref}
\hypersetup{%
  bookmarksnumbered=true,%
  colorlinks=true,%
  linkcolor=blue,%
  citecolor=blue,%
  filecolor=blue,%
  menucolor=blue,%
  urlcolor=blue,%
  bookmarksopen=true,%
  bookmarksdepth=2,%
  pageanchor=true}

\usepackage{mathtools}
\usepackage{todonotes}

\numberwithin{equation}{section}

\theoremstyle{plain}

\newtheorem{theorem}{Theorem}[section]
\newtheorem{proposition}[theorem]{Proposition}

\theoremstyle{definition}
\newtheorem{definition}[theorem]{Definition}
\newtheorem{example}[theorem]{Example}

\newtheorem*{ack}{Acknowledgements}

\theoremstyle{remark}
\newtheorem{remark}[theorem]{Remark}

\newtheorem{chunk}[theorem]{}

\newcommand{\edim}{\operatorname{emb\,dim}}
\newcommand{\End}{\operatorname{End}}
\newcommand{\Ext}{\operatorname{Ext}}
 \newcommand{\fm}{\mathfrak{m}}
\newcommand{\Hom}{\operatorname{Hom}}
\newcommand{\Ker}{\operatorname{Ker}}
\newcommand{\rank}{\operatorname{rank}}

\begin{document}

\title[Rigid ideals]{Some non-principal rigid ideals in \\ Gorenstein
  domains of dimension one}

\author[Christensen]{Lars Winther Christensen} %
\address{Texas Tech University, Lubbock, TX 79409, U.S.A.}
\email{lars.w.christensen@ttu.edu}
\urladdr{https://larswinther.github.io/homepage/}

\author[Gerko]{Alex Gerko}
\address{XTX Markets, R7 14-18 Handyside Street London,
  N1C 4DN, U.K.}  \email{alexander.gerko@xtxmarkets.com}

\author[Iyengar]{Srikanth B. Iyengar} \address{
  University of Utah\\
  Salt Lake City, UT 84112\\
  U.S.A.}  \email{srikanth.b.iyengar@utah.edu}

\begin{abstract}
  We discuss an example of a rigid non-principal ideal in a one
  dimensional (commutative) Gorenstein domain, which contradicts a
  conjecture of C.~Huneke and R. Wiegand. The construction and its
  analysis were discovered by Codex, when prompted by one of the
  authors to verify the conjecture or find a counterexample.  A proof
  of the conjecture, also discovered by Codex, is presented when the
  ring is equicharacteristic and its embedding dimension is at most
  three.
\end{abstract}

\date{21 August 2026}

\keywords{Torsion in tensor products, Gorenstein ring, rigid ideal}

\subjclass[2020]{13C13 (primary); 13D07 (secondary)}

\maketitle

\section{Introduction}

\noindent A conjecture of Huneke and Wiegand~
\cite[pp. 473]{Huneke/Wiegand:1994} postulates that any ideal $I$ in a
Gorenstein (commutative, noetherian) local domain $R$ with the
property that the $R$-module $I\otimes_R\Hom_R(I,R)$ is torsion-free,
must be free. The condition the tensor product is torsion-free is
equivalent to the condition that
\[
  \Ext^1_R(I,I)=0\,,
\]
that is to say, that the ideal $I$ is \emph{rigid}; see, for instance,
\cite[Proposition~4.3]{Huneke/Iyengar/Wiegand:2019}. This conjecture
has been verified in various special cases; see, for instance,
\cite{Huneke/Iyengar/Wiegand:2019}.

In this note we describe a general method, discovered by ChatGPT, for
constructing counter-examples to this conjecture; see
Propositions~\ref{prop:gorenstein} and \ref{prop:rigid}. A concrete
example realizing this method is given in
Example~\ref{ex:the-main-example}. In fact, in this example the ring
$R$ is standard-graded, and the ideal $I$ is two-generated, by linear
forms; we refer to Example~\ref{ex:the-main-example} for other
noteworthy features.

At about the same time as these examples were discovered, Son
Pham~\cite{Pham:2026} found another example, using GPT-5.6 Pro; this
has been verified independently by Huneke~\cite{Pham:2026}. This
example is a semi-group ring, and hence also a graded ring, but it is
not standard-graded.

\subsection*{How this was found}
Sol Ultra on Codex, prompted to prove the Huneke--Wiegand conjecture
for Gorenstein rings or find counterexample, and separately prompted
to either prove it for complete intersections or find counterexample.
The manuscript is a streamlined version of text generated by Codex.

\begin{ack}
  The authors express their thanks to Craig Huneke for sharing his
  verification of Song Pham's example.
\end{ack}

\section{Gorenstein domains of dimension one}

\noindent Given a field $K$ and a symmetric bilinear form on a finite
dimensional $K$-vector space $V$, one has a finite dimensional
commutative graded $K$-algebra $K\oplus V\oplus K$, where the only
non-trivial product $V\times V\to K$ is given by the form.  Moreover,
this algebra is Gorenstein if and only if the form is not degenerate;
see, for instance, \cite[(3.15)]{Lam:1999a}.

This section gives a method for constructing Gorenstein domains of
Krull dimension one, allegedly inspired by the work of Eisenbud and
Popescu~\cite[Theorem~7.1]{Eisenbud/Popescu:2000}, and also
Stevens~\cite{Stevens:2026}.

\begin{definition}
  \label{defn:the-ring-R}
  Let $K$ be a field and $K\subset L$ a finite extension, such that
  its degree $n\coloneqq [L:K]$ is even.  Fix a $K$-subspace
  $H\subset L$ containing $K$ and of index $1$, that is to say,
  $\rank_KH=n-1$.

  Let $W$ be a $K$-linear subspace of $H$ satisfying the following
  conditions:
  \[
    K\subsetneq W\,, \qquad \rank_KW = \frac n2\,, \qquad
    \text{and}\qquad W^2 \subseteq H\,.
  \]
  Let $t$ be an indeterminate and $L[t]$ the standard graded
  $L$-algebra, so $|t|=1$.  Since $W^2\subseteq H$ holds, the graded
  $K$-vector subspace of $L[t]$ given by
  \[
    R\coloneqq K\oplus Wt \oplus Ht^2 \bigoplus_{i\ge 3} Lt^i
  \]
  is a $K$-subalgebra. Here are the principal features of this ring.
\end{definition}

\begin{proposition}
  \label{prop:gorenstein}
  The ring $R$ is graded Gorenstein (in particular, noetherian) domain
  of Krull dimension one. As a $K$-algebra it is generated in degrees
  one and two; if $W^2=H$, then it is standard graded.
\end{proposition}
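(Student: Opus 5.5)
The plan is to compare $R$ with the polynomial ring $S\coloneqq L[t]$, to get noetherianity and dimension, and then to check the Gorenstein property on a one-dimensional quotient of $R$.

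\emph{Basic properties.} Since $R\subseteq L[t]$, the ring $R$ is a domain. It contains $\bigoplus_{i\ge 3}Lt^i=t^3L[t]$, so $L[t]/R$ has finite length. Choose a nonzero $w\in W$; as $L$ is a field, $wt$ is a nonzerodivisor.

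\emph{Finite generation and degrees of generators.} A $K$-basis of $L$ is finite, say $\ell_1,\dots,\ell_n$. Then $L[t]$ is generated as an $R$-module by $1,\ell_1,\dots,\ell_n$, since $t^3L[t]\subseteq R$ handles the high degrees and finitely many elements of degree $\le 2$ cover the rest. Hence $L[t]$ is module-finite over $R$. Moreover $R$ is a finitely generated $K$-algebra, because its graded pieces are finite dimensional and it agrees with $L[t]$ from degree $3$ on. By Artin--Tate, $R$ is noetherian of Krull dimension $\dim L[t]=1$.

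For generation in degrees $1$ and $2$, it suffices to show $R_1R_i=R_{i+1}$ for $i\ge 2$. This holds because $W\cdot L=L$ (as $W\ne 0$ and $L$ is a field), so $Wt\cdot Lt^i=Lt^{i+1}$ for $i\ge 3$. For degree $3$ we need $W\cdot H=L$. Since $W\supsetneq K$ contains $1$, we have $WH\supseteq H$. If $WH=H$, then $H$ would be a module over the subring $K[W]\subseteq L$. That subring is a field strictly larger than $K$, whose degree $d>1$ divides $n-1$ and $n$, which is impossible. So $WH=L$. If also $W^2=H$, then $R_2=R_1^2$ and $R$ is standard graded.

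\emph{Gorenstein property.} This is the main step. Modulo the nonzerodivisor $x=wt$ (after rescaling we may take $w=1$, since $1\in W$; so take $x=t$), we get $\bar R=R/tR$. We compute $tR=Wt^2\oplus Ht^3\oplus\bigoplus_{i\ge4}Lt^i$. Hence
\[
\bar R\cong K\oplus Wt\oplus (H/W)t^2\oplus (L/H)t^3,
\]
an artinian graded algebra with Hilbert function $1,\ n/2,\ n/2-1,\ 1$. It is Gorenstein iff its socle is the one-dimensional top degree $(L/H)t^3$. By symmetry of Hilbert function and the standard criterion, this amounts to showing the pairings $\bar R_1\times\bar R_2\to\bar R_3$ have trivial kernels. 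Concretely:
\begin{itemize}
\item for $a\in W\setminus 0$, we need some $h\in H$ with $ah\notin H$;
\item for $h\in H\setminus W$, we need some $a\in W$ with $ah\notin H$.
\end{itemize}

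For the first condition, $a\cdot H=H$ would force $H$ to be stable under $K[a]$; for $a\notin K$ this contradicts the index-$1$ argument above. For $a\in K^\times$ the pairing $a\cdot\bar R_2$ is nonzero as long as $H\ne W$, which holds since $n-1>n/2$ when $n\geq 4$; when $n=2$, $W=K$ is excluded. The second condition is the real obstacle. Fix a $K$-linear functional $\phi\colon L\to K$ with kernel $H$, and consider the form $(x,y)\mapsto\phi(xy)$ on $L$, which is nondegenerate (trace-like, since $L$ is a field). The condition $W^2\subseteq H$ says $W$ is totally isotropic of dimension $n/2$, hence Lagrangian, so $W^\perp=W$. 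An $h$ with $\phi(Wh)=0$ then lies in $W^\perp=W$, which is exactly the second condition. By the same Lagrangian argument, the first condition holds as well: $a\in W$ with $\phi(aH)=0$ is orthogonal to $H+W$. I would double-check that $H+W=H$ here, and that $aH\subseteq H$ combined with $\phi(a\cdot1)=0$ gives $a\perp L$, forcing $a=0$, via the field argument. With trivial kernels, $\bar R$ is Gorenstein, so $R$ is Gorenstein.
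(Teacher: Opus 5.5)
Your plan is the paper's, but the Gorenstein step, as written, is wrong. What you computed, $Wt^2\oplus Ht^3\oplus\bigoplus_{i\ge 4}Lt^i$, is $t\fm$ with $\fm=R_{\ge 1}$, not $tR$. You dropped the degree-one piece $t\cdot R_0=Kt$.

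This has three consequences:
\begin{enumerate}[\quad(1)]
\item Your $\bar R$ has Hilbert function $1,\,n/2,\,n/2-1,\,1$. This is not symmetric, so your appeal to symmetry is already inconsistent.
\item In the algebra you actually wrote down, $R/t\fm$, the class of $t$ is a nonzero socle element of degree one. Indeed $t\cdot Wt=Wt^2$ vanishes in $(H/W)t^2$, and $t\cdot Ht^2=Ht^3$ vanishes in $(L/H)t^3$. So your first condition really does fail for $a\in K^\times$, where $aH=H$. Your patch for that case, that the pairing is nonzero as long as $H\ne W$, is false.
\item The same confusion appears at the end. Since $W\subseteq H$, the condition $\phi(aH)=0$ only gives $a\in H^{\perp}$. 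But $H^{\perp}$ is one-dimensional and contains $1$, so $H^{\perp}=K$. The element $a=1$ shows that $a=0$ cannot be forced.
\end{enumerate}

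The repair is simply that $t=1\cdot t\in tR$, so
\[
  R/tR\cong K\oplus (W/K)t\oplus (H/W)t^2\oplus (L/H)t^3
\]
with Hilbert function $1,\,n/2-1,\,n/2-1,\,1$. The degree-one condition is then needed only for $a\in W\setminus K$, and there your argument works. If $aH\subseteq H$, then $H$ would be a vector space over the field $K[a]\supsetneq K$, whose degree would have to divide both $n-1$ and $n$. The paper phrases the same fact as $a\in H^{\perp}=K$. Your degree-two argument via $W^{\perp}=W$ is exactly the paper's.

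With this correction, your proof agrees with the paper's up to two harmless variations:
\begin{enumerate}[\quad(1)]
\item You obtain noetherianity and dimension one from Artin--Tate and integrality of $L[t]$ over $R$. The paper simply notes that $R$ is a $K[t]$-submodule of the finite free $K[t]$-module $L[t]$.
\item You prove $WH=L$ by the degree-divisibility argument instead of from $H^{\perp}=K$. This is equally valid and avoids the bilinear form for that step.
\end{enumerate}
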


\begin{proof}
  Clearly, one has inclusion of subrings $K[t]\subset R\subset
  L[t]$. Since $L/K$ is finite dimensional, $L[t]$ is finitely
  generated, and even free, as an $K[t]$-module. Thus, its
  $K[t]$-submodule $R$ is also finitely generated, and free. This
  justifies the claim that the ring $R$ is a noetherian domain and of
  Krull dimension one.

  In the remainder of the proof, it is useful to consider the
  $K$-linear functional $f\colon L\to L/H\cong K$, and the symmetric
  $K$-bilinear form
  \[
    \langle -,-\rangle \colon L \times L\longrightarrow K \quad
    \text{where $\langle x,y\rangle = f(xy)$}\,.
  \]
  This form is nondegenerate: Given $0\ne x\in L$ one has
  $\langle x,x^{-1}z\rangle \ne 0$ for $z\in L\setminus H$.

  For any $K$-subspace $U\subseteq L$ one has the orthogonal subspace
  \begin{equation}
    \label{eq:orth}
    U^{\perp}\coloneqq\{x\in L\mid \langle x,-\rangle=0 \text{ on } U\}\,.    
  \end{equation}
  This subspace satisfies $\rank_K U + \rank_K U^{\perp}
  =\rank_KL$. We claim
  \[
    W^{\perp} = W \qquad\text{and}\qquad H^{\perp} = K\,.
  \]
  Indeed, the hypothesis that $W^2\subseteq H$ is equivalent to
  $W\subseteq W^{\perp}$; equality holds because of the hypothesis
  that $\rank_KW=n/2$ and \eqref{eq:orth}, which justifies the
  equality on the left. As to the one on the right, again from
  \eqref{eq:orth} we deduce that $\rank_KH^{\perp}=1$. Since
  $K\subset H=\Ker f$, we conclude that $H^{\perp}=K$.

  Since $H^{\perp}=K$, for any element $w$ in $W\setminus K$ one has
  that $wH\not\subseteq H$. Thus as a $K$-algebra $R$ is generated by
  its elements of degree one and two; evidently, it is standard graded
  if also $W^2=H$.

  It remains to verify that $R$ is Gorenstein. Since $R$ is a domain
  any nonzero element, in particular $t$, is a not a zero divisor on
  it. A direct calculation yields an isomorphism of $K$-algebras
  \[
    R/Rt \cong K \oplus (W/K)t \oplus (H/W)t^2 \oplus (L/H)t^3\,.
  \]
  The elements of degree three are in the socle. We verify that these
  are the only ones; since $\rank_K(L/H)=1$ this is equivalent to
  $R/Rt$ Gorenstein; see \cite[3.2.10]{Bruns/Herzog:1998a}

  Suppose the residue class of $wt$ is in the socle of $R/Rt$. Then,
  from the structure of $R$, we get $wH\subseteq H$, that is to say
  $w$ in $H^{\perp}=K$. Then $wt=0$ in $R/Rt$. Similarly if the
  residue class of $ht^2$ is in the socle, then $hW\subseteq H$, so
  $h$ is in $W^{\perp}=W$, and hence $ht^2=0$ in $R/Rt$.

  This completes the verification that $R/Rt$ is Gorenstein. Since $t$
  is a homogeneous element and not a zero-divisor, we conclude that
  $R$ is Gorenstein; see \cite[3.1.19]{Bruns/Herzog:1998a}.
\end{proof}

\section{Rigid ideals}
In this section, we identify some conditions on the rings described in
the previous section that ensure that they contain rigid ideals that
are not principal.

In the setup of Definition~\ref{defn:the-ring-R}, fix an element
$\alpha$ in $W\setminus K$ and consider the ideal
\[
  I \coloneqq (t,\alpha t)R
\]
of $R$. This ideal is clearly not principal, and hence not free. The
claim is that, for a suitable choice of subspace $W$, the ideal $I$ is
rigid. Not every choice of $W$ works, as can be verified by a direct
check.

We focus now on a special situation when it does, leaving the general
analysis that leads to this choice to the end of this section.

\begin{definition}
  \label{defn:W}
  Let $L=K[\alpha]$, where $\alpha$ has degree $n\equiv 0\mod 4$, with
  $\alpha\notin K$, so $n\ge 4$, and $\alpha^n\in K^\times$. Thus
  $1,\alpha,\alpha^2,\dots,\alpha^{n-1}$ is a basis for $L$ over
  $K$. For $i\ge 0$, let $L_i$ denote the $K$-subspace of $L$ spanned
  by $1,\dots,\alpha^i$. Thus one has a filtration of $L$ by
  $K$-subspaces
  \[
    K=L_0\subset L_1\subset \cdots \subset L_{n-1} = L\,.
  \]
  Choose a $K$-subspace $V\subset L$ containing $L_0=K$ and such that
  the following conditions hold:
  \begin{enumerate}[\quad\rm(1)]
  \item $\rank_KV = n/4$;
  \item the sum $V + \alpha V + \alpha^2 V$ is direct;
  \item $V^2 = L_{n-4}$;
  \item $L= VL_{n-3}$.
  \end{enumerate}
  With $V$ as above, set $W\coloneqq V\oplus \alpha V$ and
  $H\coloneqq L_{n-2}$. Recall that
  \[
    R\coloneqq K\oplus Wt \oplus Ht^2 \bigoplus_{i\ge 3} Lt^i
  \]
  viewed as a graded $K$-subalgebra of $L[t]$.
\end{definition}

\begin{proposition}
  \label{prop:rigid}
  In the setup of Definition \ref{defn:W}, the ring $R$ is a standard
  graded Gorenstein domain of Krull dimension one. The ideal
  $(t,\alpha\,t)$ is rigid, but not free.
\end{proposition}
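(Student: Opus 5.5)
The plan has two parts: check that Definition~\ref{defn:W} is a special case of Definition~\ref{defn:the-ring-R} with $W^2=H$, and then turn rigidity of $I=(t,\alpha t)R$ into an equality of graded fractional ideals inside $L[t,t^{-1}]$, which can be compared degree by degree.

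\emph{The ring.} Since $1\in V$, condition (3) gives $V\subseteq V^2=L_{n-4}$. Hence $W=V\oplus\alpha V\subseteq L_{n-3}\subseteq H=L_{n-2}$.
\begin{itemize}
\item By (2) and (1), $\rank_K W=n/2$.
\item Since $\alpha\in W\setminus K$, one has $K\subsetneq W$.
\item By (3), $W^2=V^2+\alpha V^2+\alpha^2V^2=L_{n-4}+\alpha L_{n-4}+\alpha^2L_{n-4}=L_{n-2}=H$.
\end{itemize}
So Proposition~\ref{prop:gorenstein} applies and gives that $R$ is a standard graded Gorenstein domain of dimension one. The ideal $I$ is not principal: its minimal generators are $t$ and $\alpha t$, which are $K$-independent in $Wt$. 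Hence $I$ is not free.

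\emph{Reduction of rigidity.} Since $t$ is a nonzerodivisor, $I\cong J\coloneqq R+\alpha R$, and $\Ext^1_R(I,I)\cong\Ext^1_R(J,J)$.
\begin{itemize}
\item Presentation of $J$: use $R^2\to J$, $(a,b)\mapsto a+\alpha b$. Its kernel is isomorphic to $C\coloneqq R\cap\alpha^{-1}R$ via $c\mapsto(-\alpha c,c)$.
\item Since $R$ is a domain, $\Hom_R(C,J)=(J:C)$ inside the fraction field. The restriction map $J^2=\Hom_R(R^2,J)\to\Hom_R(C,J)$ is $(x,y)\mapsto y-\alpha x$, so its image is $J+\alpha J=R+\alpha R+\alpha^2R$.
\item Therefore rigidity is equivalent to the equality $(J:C)=J+\alpha J$.
\end{itemize}

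\emph{Graded pieces.} Using $W=V\oplus\alpha V$ and the directness in (2), I expect:
\begin{itemize}
\item $C=Vt\oplus L_{n-3}t^2\oplus\bigoplus_{i\ge3}Lt^i$;
\item $J=L_1\oplus(V\oplus\alpha V\oplus\alpha^2V)t\oplus Lt^2\oplus\cdots$;
\item $J+\alpha J=L_2\oplus(W+\alpha W+\alpha^2W)t\oplus Lt^2\oplus\cdots$.
\end{itemize}
The inclusion $J+\alpha J\subseteq(J:C)$ is automatic. For the reverse inclusion, take a homogeneous element $xt^d$ of $(J:C)$ and argue by degree.
\begin{itemize}
\item For $d\le-1$: the condition $xLt^{d+3}\subseteq J$ together with $xL_{n-3}t^{d+2}\subseteq J$, combined with (4) $L=VL_{n-3}$, should force $x=0$. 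For instance, when $d=-1$, $xV\subseteq L_1$ and $xL_{n-3}\subseteq W+\alpha W$ give $xL=xVL_{n-3}$ inside a proper subspace, which is impossible for $x\ne0$.
\item For $d=0$: the conditions $xV\subseteq V\oplus\alpha V\oplus\alpha^2V$ and $xL_{n-3}\subseteq L$ must force $x\in L_2$. Here I would expand $x$ in the basis $1,\alpha,\dots,\alpha^{n-1}$ and use the directness in (2) together with $V\subseteq L_{n-4}$ to kill the top coefficients.
\item For $d=1$: the colon condition is vacuous, so I need $W+\alpha W+\alpha^2W=L$, that is, $V+\alpha V+\alpha^2V+\alpha^3V=L$. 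A rank count gives $n$, so this holds if the sum is direct. I would try to deduce directness from (2) and (3), or from $\alpha^n\in K^\times$ by multiplying by $\alpha^{-1}$.
\item For $d\ge2$ there is nothing to check.
\end{itemize}

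The main obstacle is the low-degree part of this computation: showing that $(J:C)$ has nothing in negative degree and only $L_2$ in degree $0$, and establishing the degree-one spanning. These are exactly the places where conditions (2)--(4) have to be used sharply. If the direct coefficient argument gets messy, I would instead dualize using the nondegenerate form $\langle-,-\rangle$ from the proof of Proposition~\ref{prop:gorenstein}, rewriting each colon condition as an orthogonality statement, where $W^\perp=W$ and $H^\perp=K$ are available.
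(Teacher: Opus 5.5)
Your reduction is correct and takes a different route from the paper. The presentation $0\to C\to R^2\to J\to 0$, with $J=R+\alpha R$ and $C=R\cap\alpha^{-1}R$, does show that $I$ is rigid if and only if $(J:C)=J+\alpha J$. Your graded pieces of $C$, $J$ and $J+\alpha J$ are right, and so is your treatment of degrees $d\le -1$ and $d\ge 2$.

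\textbf{The gap.} The two degrees that carry all the content, $d=0$ and $d=1$, are only announced, and the methods you propose first cannot work.
\begin{enumerate}[\quad(a)]
\item In degree $0$ you need $\{x\in L: xV\subseteq V\oplus\alpha V\oplus\alpha^2V\}=L_2$. This depends on the \emph{equality} $V^2=L_{n-4}$, not just on (2) and $V\subseteq L_{n-4}$. For instance, take $n=8$ and $V=K+K\alpha^4$. Then (1), (2), (4) and $V\subseteq L_4$ all hold, but $\alpha^4(V\oplus\alpha V\oplus\alpha^2V)\subseteq V\oplus\alpha V\oplus\alpha^2V$. So the colon is all of $V\oplus\alpha V\oplus\alpha^2V$, of rank $6$, not $L_2$.
\item In degree $1$ you need $L=V\oplus\alpha V\oplus\alpha^2V\oplus\alpha^3V$. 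This does not follow from the directness in (2): with $n=8$ and $V=K+K\alpha^3$, the sum $V+\alpha V+\alpha^2V$ is direct but $\alpha^3V\cap V\ne 0$. Multiplying by $\alpha^{-1}$ does not produce it either.
\end{enumerate}

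\textbf{What is missing.} It is exactly your fallback, and it is needed rather than optional. Take $\langle x,y\rangle=f(xy)$ as in the proof of Proposition~\ref{prop:gorenstein}. You have shown $W^2=H$ and that $W$ has rank $n/2$, so $W^\perp=W$, and hence $(\alpha^jW)^\perp=\alpha^{-j}W$. Condition (2) gives $W\cap\alpha^{-1}W=V$.
\begin{enumerate}[\quad(a)]
\item Therefore $(W+\alpha W)^\perp=V$, that is, $V\oplus\alpha V\oplus\alpha^2V=V^\perp$. Consequently $xV\subseteq V^\perp$ iff $f(xV^2)=0$ iff $xL_{n-4}\subseteq L_{n-2}$ iff $x\in L_2$. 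The last two steps use (3) and a coefficient check with $\alpha^n\in K^\times$.
\item Likewise $(W+\alpha W+\alpha^2W)^\perp=V\cap\alpha^{-1}V=0$, which gives the degree-one equality.
\end{enumerate}
With these two steps inserted, your argument is complete.

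\textbf{Comparison with the paper.} The paper instead quotes the colon criterion of Huneke--Iyengar--Wiegand. For $I=(t,\alpha t)$ it reads $C\cap\alpha C=\alpha C^2$, with the same $C$ as yours (the paper calls it $J$). That condition is the Gorenstein dual of yours:
\begin{enumerate}[\quad(i)]
\item $J=(R:C)$, so $(J:C)=(R:C^2)$;
\item $(R:J^2)=R\cap\alpha^{-1}R\cap\alpha^{-2}R=\alpha^{-1}(C\cap\alpha C)$.
\end{enumerate}
On the paper's side only products and intersections occur. Conditions (3) and (4) give $C^2=L_{n-4}t^2\oplus\bigoplus_{i\ge 3}Lt^i$. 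Condition (2) gives $C\cap\alpha C=(L_{n-3}\cap\alpha L_{n-3})t^2\oplus\bigoplus_{i\ge3}Lt^i$. One finishes with the trivial identity $L_{n-3}\cap\alpha L_{n-3}=\alpha L_{n-4}$.

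Your route is self-contained, since it computes $\Ext^1$ directly from a presentation with no external criterion. The price is a colon computation that in effect requires redoing the Gorenstein duality by hand inside $L$.
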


\begin{proof}
  The claims about the ring $R$ follows from
  Proposition~\ref{prop:gorenstein}; observe that $W^2=H$, by
  condition (3) in Definition~\ref{defn:W}.

  To verify that $(t,\alpha\, t)$ is rigid, we use the criterion
  \cite[Corollary~4.9]{Huneke/Iyengar/Wiegand:2019}; that statement is
  for local rings, but the argument applies equally well for
  homogeneous ideals in graded rings. Thus, we have to verify the
  equality:
  \[
    ((t):_R \alpha\, t)\cap ((\alpha\,t):_R t) = ((t):_R \alpha\, t)
    ((\alpha\,t):_R t)\,.
  \]
  To begin with, observe that
  \[
    ((\alpha\,t):_R t) = \alpha\cdot ((t):_R \alpha\, t)\,;
  \]
  keep in mind that $\alpha$ itself is not an element of $R$. Thus the
  desired equality is that
  \[
    J\cap \alpha J = \alpha J^2 \quad \text{where
      $J=((t):_R \alpha\, t)$.}
  \]
  Since $(t)= Kt \oplus Wt^2 \oplus Ht^3 \bigoplus_{i\ge 4} Lt^{i}$,
  using condition (2) in Definition~\ref{defn:W} a direct verification
  yields
  \[
    J = Vt \oplus L_{n-3}t^2 \bigoplus_{i\ge 3} Lt^i\qquad\text{and
      hence}\qquad \alpha J = \alpha V t \oplus \alpha L_{n-3}t^2
    \bigoplus_{i\ge 3} Lt^i\,.
  \]
  Observing that $V\cap \alpha V=0$, by condition (2) in
  Definition~\ref{defn:W}, one thus gets that
  \[
    J\cap \alpha J = (L_{n-3}\cap \alpha L_{n-3})t^2 \bigoplus_{i\ge
      3} Lt^i\,.
  \]
  On the other hand, a direct computation yields the first equality
  below
  \begin{align*}
    \alpha J^2 & = \alpha (V^2 t^2 \oplus V L_{n-3}t^3 \bigoplus_{i\ge 4} Lt^i) \\
               &=\alpha (L_{n-4}t^2 \oplus Lt^3 \bigoplus_{i\ge 4}Lt^i) \\
               &  = \alpha L_{n-4} t^2 \oplus \bigoplus_{i\ge 3} Lt^i\,.
  \end{align*}
  The second equality holds by conditions (3) and (4) in
  Definition~\ref{defn:W}, and the third follows from $\alpha L=L$.
  Thus, to verify that $J\cap\alpha\, J=\alpha J^2$, it remains to
  verify that
  \[
    L_{n-3}\cap \alpha L_{n-3} = \alpha L_{n-4}\,.
  \]
  This is clear, given the description of the $L_i$.
\end{proof}

\subsection*{A numerical constraint}
In the notation above, suppose $d\coloneqq \rank_KV$, so that
$\rank_KW = 2d$ and $\rank_KL =4d$. Thus the condition that
$V^2=L_{n-4}$, from Definition~\ref{defn:W}(2), implies that
\[
  \binom{d+1}{2} = \rank_K\mathrm{Sym}^2_K(V)\ge \rank_K L_{n-4} = 4d
  - 3\,.
\]
This leads to the condition that
\[
  (d-1)(d-6)\ge 0\,.
\]
One cannot have $d=1$, for then $V=K$ so $VL_{n-3} = L_{n-3} \ne
L$. Thus the smallest possible value for $d$ that would permit $V$ to
satisfy the constraints in Definition~\ref{defn:W} is $6$. Then
$\rank_KL=24$.

When $d=6$, one has
\[
  \rank_K \mathrm{Sym}^2_K(V) = 21 = \rank_K L_{20}
\]
so condition (2) in Definition~\ref{defn:W} is equivalent to the
non-vanishing of a determinant of size $21\times 21$.

These considerations motivate the following example.

\begin{example}
  \label{ex:the-main-example}
  Let $K=\mathbb{Q}$ and
  $L\coloneqq \mathbb{Q}[\alpha]/(\alpha^{24}-2)$. The polynomial
  $x^{24}-2$ is irreducible, by, say the Eisenstein criterion, so
  $\rank_KL=24$. Let $V$ be the $\mathbb{Q}$-subspace of $L$ spanned
  by the six elements
  \begin{gather*}
    1, \quad \alpha - \alpha^{15}, \quad \alpha^2 - \alpha^{12} - \alpha^{16}\\\
    \alpha^4 - \alpha^{14}, \quad \alpha^6 - \alpha^{13}+\alpha^{20},
    \quad \alpha^9+\alpha^{16}+\alpha^{19}\,.
  \end{gather*}
  One can verify directly by hand, or using a computer algebra system,
  that $V$ satisfies the conditions listed in Definition~\ref{defn:W};
  see Section~\ref{sec:M2} for a Macaulay 2 verification. Thus
  Propositions~\ref{prop:gorenstein} and \ref{prop:rigid} justify the
  following claims.

  With $V$ and $L$ as above, and $W\coloneqq V\oplus \alpha V$, the
  subring
  \[
    R\coloneqq \mathbb{Q} \oplus Wt \oplus L_{22}t^2
    \oplus_{i\geqslant 3} Lt^i
  \]
  of $L[t]$ is a standard graded Gorenstein $\mathbb{Q}$-algebra and a
  domain. Moreover, the ideal $I\coloneqq (t,\alpha t)$ is rigid, but
  not principal.

  The ideal $I$ has the added property that the evaluation map
  \[
    I\otimes_R \operatorname{Hom}_R(I,R)\longrightarrow R
  \]
  is an isomorphism onto $\fm$, the homogeneous maximal ideal of $R$.
\end{example}

\begin{remark}
  One can localize at the homogeneous maximal ideal
  $\fm\coloneqq R^{\geqslant 1}$ and get an example of local
  Gorenstein domain of Krull dimension one that provides a
  counterexample to the conjecture of Huneke and Wiegand.

  A commutative algebra analogue of a conjecture of Auslander and
  Reiten~\cite{Auslander/Reiten:1975a} states that over any Gorenstein
  ring $R$, if a finitely generated $R$-module $M$ satisfies
  $\Ext_R^i(M,M)=0$ for $i\ge 1$, then $M$ is free. For the ring $R$
  and ideal $I$ in Example~\ref{ex:the-main-example} one can verify
  that
  \[
    \Ext_R^2(I,I)\ne 0\,.
  \]
  Thus $I$ is not a counterexample to the Auslander-Reiten conjecture.
\end{remark}

\subsection*{An ansatz leading to Definition~\ref{defn:W}}
Let $R$ be a ring as in Definition~\ref{defn:the-ring-R} and set
$I=(t,\alpha\, t)$, for some element $\alpha$ in $L\setminus K$. As
explained in the proof of Proposition~\ref{prop:rigid}, the ideal $I$
is rigid precisely when there is an equality
\begin{equation}
  \label{eq:rigidity}
  J\cap \alpha J = \alpha J^2\,,\qquad \text{where $J\coloneqq ((t):_R \alpha\, t)$.}    
\end{equation}
Consider the following $K$-subspaces
\[
  V \coloneqq \alpha^{-1}W\cap W\quad\text{and}\quad E \coloneqq
  \alpha^{-1}H\cap H\,.
\]
It is straightforward to verify that
\[
  J = Vt \oplus Et^2 \oplus_{i\geqslant 3} Lt^i\,,
\]
and hence condition \eqref{eq:rigidity} translates to
\begin{equation}
  \label{eq:V-constraints}
  V\cap \alpha V = (0)\,, \quad E\cap \alpha E = \alpha V^2\,, \quad\text{and}\quad L =  VE\,.    
\end{equation}
Keep in mind that for any $K$-subspace $U$ of $L$, one has
$L=\alpha U$ if and only if $L=U$. The first condition means that
$V\oplus \alpha V\subseteq W$ so a first guess is to take
\[
  W\coloneqq V\oplus \alpha V\,.
\]
Since $\rank_KW$ should be $n/2$ for Proposition~\ref{prop:gorenstein}
to apply, $\rank_KV=n/4$ must hold, which explains condition (1) in
Definition~\ref{defn:W}.

Then since $V=\alpha^{-1}W\cap W$, it follows that that the sum
\[
  V +\alpha V + \alpha^2 V
\]
is direct, which explains condition (2) in Definition~\ref{defn:W}.

Now suppose that $L=K[\alpha]$ is as in Definition~\ref{defn:W}. In
the notation introduced there, one has $H=L_{n-2}$, so
\[
  E = \alpha^{-1}H\cap H = L_{n-3}\,.
\]
Thus the two conditions on the right in \eqref{eq:V-constraints}
translate to conditions (3) and (4) in Definition~\ref{defn:W}.

\section{Verification with Macaulay2}
\label{sec:M2}

\noindent In this section we provide \emph{Macaulay2} code that we
used to verify some of the assertions about
Example~\ref{ex:the-main-example}.

\subsection*{Example~\ref{ex:the-main-example} revisited}
The \emph{Macaulay2} \cite{M2} script below verifies that the subspace
$V$ in \ref{ex:the-main-example} satisfies the conditions in
Definition \ref{defn:W}. It is available from \cite{HWm2:2026}.

{\small \begin{verbatim} -- Setting up K and L

K = QQ; L = K[a]/ideal(a^24-2);

-- Defining a function to compute the standard coordinates
-- of a polynomial in L

polyToList = p -> ( apply(24, i -> coefficient(a^i,p)) );

-- Setting up the subspace V of L

gensV = {1_L, a - a^15, a^2 - a^12 - a^16, a^4 - a^14, 
    a^6 - a^13 + a^20, a^9 + a^16 + a^19};

V = image transpose matrix apply( gensV, b -> polyToList(b) );
rank V -- rank is 6
 
-- Setting up alpha times V and alpha^2 times V
 
gensaV  = apply( gensV, b-> a*b );
aV = image transpose matrix apply( gensaV, b -> polyToList(b) );
rank aV --  output: 6

gensa2V = apply( gensV, b-> a^2*b );
a2V = image transpose matrix apply( gensa2V, b -> polyToList(b) );
rank a2V -- output: 6

-- Verifying condition 3.1(1)

rank(V+aV+a2V) == rank V + rank aV + rank a2V -- output: true

-- Verifying condition 3.1(2)

gensV2 = flatten apply( 6,i -> apply( i+1, j -> (gensV#i)*(gensV#j) ));
V2 = image transpose matrix apply( gensV2, b -> polyToList(b) );
rank V2 -- output: 21

basisL20 = apply( 21, i -> a^i );
L20 = image transpose matrix apply( basisL20, b -> polyToList(b) );
V2 == L20 -- output: true

-- Verifying condition 3.1(3)

gensVL21 = flatten apply( gensV, b -> apply( 22, i -> a^i*b ));
VL21 = image transpose matrix apply( gensVL21, b -> polyToList(b) );
rank VL21 -- output: 24, so it is all of L
\end{verbatim}}

Next we realize the ring from Example \ref{ex:the-main-example} as a
quotient of a polynomial algebra.  Set
$Q = \mathbb{Q}[x_1,\ldots,x_6,y_1,\ldots,y_6]$ and consider the
canonical ring homomorphism $\phi\colon Q \to L[t]$ that maps the
indeterminates to the basis for $Wt = (V \oplus \alpha V)t$ so that
\begin{gather*}
  x_1 \mapsto t\,, \ldots\,, x_6 \mapsto (\alpha^9 + \alpha^{16} + \alpha^{19})\\
  y_1 \mapsto \alpha t\,,\ldots\,, y_6 \mapsto \alpha(\alpha^9 +
  \alpha^{16} + \alpha^{19})t\,.
\end{gather*}
By construction, the image of $\phi$ in degree $1$ is $R_1$, and since
$R$ and $Q$ are standard graded, the kernel of $\phi$ is the defining
ideal of $R$ as a quotient of $Q$.  Recalling that the Hilbert series
of $R$ is $1 + 12t + 23t^2 + \sum_{i=3}^\infty 24t^i$ and
$\rank_K\mathrm{Sym}^2(Q_1) = \binom{13}{2} = 78$, one sees that the
kernel of $\phi$ is generated by $78-23 = 55$ quartics.

The script {\small \begin{verbatim}

-- Setting up the polynomial algebra

Q = QQ[x_1..x_6,y_1..y_6];

-- Setting up the basis for Wt = (V + aV)t 

Lt = L[t];

gensWt = apply( gensV|gensaV, b -> b*t );

-- Define the canonical map phi: Q -> L[t] that maps the 
-- 12 indeterminates to the basis for Wt

phi = map(Lt,Q,gensWt);

I = ker phi;

-- Set up the 78 - 23 = 55 relations in Q to define R

rels = { x_2^2 - y_1^2 - 2*y_1*y_4,  
    x_2*x_6 + 2*x_1^2 - 36*x_1*x_2 + 72*x_1*x_3 + 42*x_1*x_4 - 2*x_1*x_5
  + 36*x_1*y_1 - 74*x_1*y_2 + 4*x_1*y_3 + 72*x_1*y_4 + 12*x_1*y_5 +
  x_1*y_6 + 2*y_1^2 + 2*y_1*y_2 - 48*y_1*y_3 + 2*y_1*y_4 - 12*y_1*y_6
  - 2*x_2*x_3 - 6*x_2*x_5 - 36*x_2*y_5 + 18*y_2^2,  
    x_2*y_1 - x_1*y_2,   
    x_2*y_2 + 24*x_1*x_2 - 48*x_1*x_3 - 28*x_1*x_4 - 24*x_1*y_1 +
  48*x_1*y_2 - x_1*y_3 - 48*x_1*y_4 - 8*x_1*y_5 - 3*y_1*y_2 +
  32*y_1*y_3 + 8*y_1*y_6 + x_2*x_3 + 4*x_2*x_5 + 24*x_2*y_5 -
  12*y_2^2,  
    x_2*y_3 + 2*x_1*x_2 - 4*x_1*x_3 - 3*x_1*x_4 - 2*x_1*y_1 + 4*x_1*y_2
  - 4*x_1*y_4 + 2*y_1*y_3 + 2*x_2*y_5 - 2*y_2^2,  
    x_2*y_4 + x_1*x_5 + 3*x_1*y_2 - x_1*y_3 - 3*y_1^2 + y_1*y_2 -
  4*y_1*y_4,  
    x_2*y_6 - 6*x_1*x_2 + 12*x_1*x_3 + 10*x_1*x_4 + 8*x_1*y_1 -
  12*x_1*y_2 + 12*x_1*y_4 + 2*x_1*y_5 - 10*y_1*y_3 - 3*y_1*y_6 -
  2*x_2*x_5 - 6*x_2*y_5 + 4*y_2^2,  
    x_3^2 - 2*x_1^2 + 4*x_1*x_2 - 8*x_1*x_3 - 6*x_1*x_4 - 4*x_1*y_1 +
  8*x_1*y_2 - 8*x_1*y_4 + 4*y_1*y_3 + 4*x_2*y_5 - 3*y_2^2,
    x_3*x_4 + x_1*x_5 + 2*x_1*y_2 - x_1*y_3 - 4*y_1^2 + y_1*y_2 -
  4*y_1*y_4,  
    x_3*x_5 + 2*x_1*x_2 - 4*x_1*x_3 + x_1*x_4 - 4*x_1*y_1 + 4*x_1*y_2 -
  4*x_1*y_4 - y_1*y_3 + x_2*y_5,  
    x_3*x_6 - 18*x_1*x_2 + 36*x_1*x_3 + 22*x_1*x_4 + 20*x_1*y_1 -
  36*x_1*y_2 + 36*x_1*y_4 + 5*x_1*y_5 - 24*y_1*y_3 - 5*y_1*y_6 -
  3*x_2*x_5 - 18*x_2*y_5 + 10*y_2^2,  
    x_3*y_1 - x_1*y_3,  
    x_3*y_2 + 2*x_1*x_2 - 4*x_1*x_3 - 3*x_1*x_4 - 2*x_1*y_1 + 4*x_1*y_2
  - 4*x_1*y_4 + 2*y_1*y_3 + 2*x_2*y_5 - 2*y_2^2,  
    x_3*y_3 - 5*x_1*x_2 - 2*x_1*x_6 + 3*x_1*y_1 - 2*x_1*y_2 + 7*x_1*y_4
  + 2*y_1^2 - 4*x_2*x_4,  
    x_3*y_4 + 22*x_1*x_2 - 44*x_1*x_3 - 27*x_1*x_4 - 22*x_1*y_1 +
  44*x_1*y_2 - 44*x_1*y_4 - 7*x_1*y_5 - 2*y_1*y_2 + 30*y_1*y_3 +
  8*y_1*y_6 + 4*x_2*x_5 + 22*x_2*y_5 - 11*y_2^2,  
    x_3*y_5 + x_1*x_6 + 2*x_1*y_2 - 2*x_1*y_3 - 4*y_1^2 + 2*y_1*y_2 -
  2*y_1*y_4,  
    x_3*y_6 + 6*x_1*x_2 - 5*x_1*x_3 + 2*x_1*x_6 - 6*x_1*y_1 + 7*x_1*y_2
  - 11*x_1*y_4 + 3*x_2*x_4 + 2*x_2*y_5,  
    x_4^2 + 3*x_1*x_2 - 6*x_1*x_3 - 2*x_1*x_4 - 3*x_1*y_1 + 6*x_1*y_2 -
  6*x_1*y_4 + 2*y_1*y_3 + 3*x_2*y_5 - 2*y_2^2,  
    x_4*x_5 - 2*x_1^2 - 24*x_1*x_2 + 48*x_1*x_3 + 28*x_1*x_4 +
  24*x_1*y_1 - 48*x_1*y_2 + 2*x_1*y_3 + 48*x_1*y_4 + 8*x_1*y_5 +
  x_1*y_6 + 2*y_1*y_2 - 32*y_1*y_3 - 8*y_1*y_6 - 2*x_2*x_3 - 4*x_2*x_5
  - 24*x_2*y_5 + 12*y_2^2,  
    x_4*x_6 + 4*x_1*x_2 - x_1*x_5 + 2*x_1*x_6 - 4*x_1*y_1 - x_1*y_2 +
  2*x_1*y_3 - 6*x_1*y_4 + y_1^2 - 2*y_1*y_2 + 3*y_1*y_4 + 2*x_2*x_4,  
    x_4*y_1 - x_1*y_4,  
    x_4*y_2 + x_1*x_5 + 3*x_1*y_2 - x_1*y_3 - 3*y_1^2 + y_1*y_2 -
  4*y_1*y_4,  
    x_4*y_3 + 22*x_1*x_2 - 44*x_1*x_3 - 27*x_1*x_4 - 22*x_1*y_1 +
  44*x_1*y_2 - 44*x_1*y_4 - 7*x_1*y_5 - 2*y_1*y_2 + 30*y_1*y_3 +
  8*y_1*y_6 + 4*x_2*x_5 + 22*x_2*y_5 - 11*y_2^2,  
    x_4*y_4 - 4*x_1*x_2 - x_1*x_6 + 4*x_1*y_1 - x_1*y_2 + 7*x_1*y_4 +
  y_1^2 - 3*x_2*x_4,  
    x_4*y_5 + 4*x_1*x_2 - 8*x_1*x_3 - 4*x_1*x_4 - 6*x_1*y_1 + 8*x_1*y_2
  - 8*x_1*y_4 + 4*y_1*y_3 + y_1*y_6 + 4*x_2*y_5 - 2*y_2^2,  
    x_4*y_6 - 38*x_1*x_2 + 76*x_1*x_3 + 47*x_1*x_4 - 2*x_1*x_5 +
  38*x_1*y_1 - 78*x_1*y_2 + 4*x_1*y_3 + 76*x_1*y_4 + 13*x_1*y_5 +
  2*x_1*y_6 + 2*y_1^2 + 2*y_1*y_2 - 52*y_1*y_3 + 2*y_1*y_4 -
  14*y_1*y_6 - 2*x_2*x_3 - 7*x_2*x_5 - 38*x_2*y_5 + 19*y_2^2,  
    x_5^2 + 4*x_1*x_2 + x_1*x_3 + 4*x_1*x_6 - 4*x_1*y_1 + 5*x_1*y_2 -
  6*x_1*y_4 - 12*y_1^2 + 2*x_2*x_4,  
    x_5*x_6 + 5*x_1*x_2 - 2*x_1*x_3 - 7*x_1*y_1 + 2*x_1*y_2 - 4*x_1*y_4
  + 2*x_2*y_5,  
    x_5*y_1 - x_1*y_5,  
    x_5*y_2 - x_2*y_5,  
    x_5*y_3 + x_1*x_6 + 2*x_1*y_2 - 2*x_1*y_3 - 4*y_1^2 + 2*y_1*y_2 -
  2*y_1*y_4,
    x_5*y_4 + 4*x_1*x_2 - 8*x_1*x_3 - 4*x_1*x_4 - 6*x_1*y_1 + 8*x_1*y_2
  - 8*x_1*y_4 + 4*y_1*y_3 + y_1*y_6 + 4*x_2*y_5 - 2*y_2^2,  
    x_5*y_5 - 144*x_1*x_2 + 288*x_1*x_3 + 168*x_1*x_4 - 2*x_1*x_5 +
  144*x_1*y_1 - 290*x_1*y_2 + 15*x_1*y_3 + 288*x_1*y_4 + 48*x_1*y_5 +
  4*x_1*y_6 + 2*y_1^2 + 15*y_1*y_2 - 192*y_1*y_3 + 2*y_1*y_4 -
  48*y_1*y_6 - 12*x_2*x_3 - 24*x_2*x_5 - 144*x_2*y_5 + 72*y_2^2,  
    x_5*y_6 + 4*x_1*x_2 + 2*x_1*x_6 - 4*x_1*y_1 + 5*x_1*y_2 + 2*x_1*y_3
  - 6*x_1*y_4 - 7*y_1^2 - 2*y_1*y_2 + 2*x_2*x_4,  
    x_6^2 + 26*x_1*x_2 - 52*x_1*x_3 - 31*x_1*x_4 - 30*x_1*y_1 +
  52*x_1*y_2 - 52*x_1*y_4 - 4*x_1*y_5 + 33*y_1*y_3 + 4*y_1*y_6 +
  4*x_2*x_5 + 26*x_2*y_5 - 14*y_2^2,  
    x_6*y_1 - x_1*y_6,   
    x_6*y_2 - 6*x_1*x_2 + 12*x_1*x_3 + 10*x_1*x_4 + 8*x_1*y_1 -
  12*x_1*y_2 + 12*x_1*y_4 + 2*x_1*y_5 - 10*y_1*y_3 - 3*y_1*y_6 -
  2*x_2*x_5 - 6*x_2*y_5 + 4*y_2^2,  
    x_6*y_3 + 6*x_1*x_2 - 5*x_1*x_3 + 2*x_1*x_6 - 6*x_1*y_1 + 7*x_1*y_2
  - 11*x_1*y_4 + 3*x_2*x_4 + 2*x_2*y_5,  
    x_6*y_4 - 38*x_1*x_2 + 76*x_1*x_3 + 47*x_1*x_4 - 2*x_1*x_5 +
  38*x_1*y_1 - 78*x_1*y_2 + 4*x_1*y_3 + 76*x_1*y_4 + 13*x_1*y_5 +
  2*x_1*y_6 + 2*y_1^2 + 2*y_1*y_2 - 52*y_1*y_3 + 2*y_1*y_4 -
  14*y_1*y_6 - 2*x_2*x_3 - 7*x_2*x_5 - 38*x_2*y_5 + 19*y_2^2,  
    x_6*y_5 + 4*x_1*x_2 + 2*x_1*x_6 - 4*x_1*y_1 + 5*x_1*y_2 + 2*x_1*y_3
  - 6*x_1*y_4 - 7*y_1^2 - 2*y_1*y_2 + 2*x_2*x_4,  
    x_6*y_6 + 4*x_1*x_2 + 4*x_1*x_3 - 2*x_1*x_6 - 4*x_1*y_1 - 6*x_1*y_2
  - x_1*y_4 - 2*y_1^2 - x_2*x_4,  
    y_2*y_3 - 6*x_1*x_2 - 2*x_1*x_6 + 6*x_1*y_1 - 2*x_1*y_2 + 9*x_1*y_4
  + 2*y_1^2 - 4*x_2*x_4,
    y_2*y_4 + 34*x_1*x_2 - 68*x_1*x_3 - 41*x_1*x_4 - 34*x_1*y_1 +
  68*x_1*y_2 - x_1*y_3 - 68*x_1*y_4 - 11*x_1*y_5 - 2*y_1*y_2 +
  46*y_1*y_3 + 12*y_1*y_6 + x_2*x_3 + 6*x_2*x_5 + 34*x_2*y_5 -
  17*y_2^2,  
    y_2*y_5 - 2*x_1*x_2 - x_1*x_6 + 2*x_1*y_1 - 2*x_1*y_3 + 3*x_1*y_4 +
  2*y_1*y_2 - 2*y_1*y_4 - x_2*x_4,  
    y_2*y_6 + 8*x_1*x_2 - 3*x_1*x_3 + 2*x_1*x_6 - 8*x_1*y_1 + 5*x_1*y_2
  - 11*x_1*y_4 - y_1*y_5 + 3*x_2*x_4 + x_2*y_5,  
    y_3^2 + 24*x_1*x_2 - 48*x_1*x_3 - 28*x_1*x_4 + 4*x_1*x_5 -
  24*x_1*y_1 + 55*x_1*y_2 - 6*x_1*y_3 - 48*x_1*y_4 - 8*x_1*y_5 -
  2*x_1*y_6 - 9*y_1^2 + 32*y_1*y_3 - 9*y_1*y_4 + 8*y_1*y_6 + 2*x_2*x_3
  + 4*x_2*x_5 + 24*x_2*y_5 - 12*y_2^2,  
    y_3*y_4 - x_1*x_2 - 2*x_1*x_4 + x_1*y_1 + 2*y_1*y_3 + y_1*y_5 -
  2*y_2^2,  
    y_3*y_5 - 16*x_1*x_2 + 32*x_1*x_3 + 16*x_1*x_4 + 16*x_1*y_1 -
  32*x_1*y_2 + 2*x_1*y_3 + 32*x_1*y_4 + 4*x_1*y_5 + x_1*y_6 +
  2*y_1*y_2 - 20*y_1*y_3 - 4*y_1*y_6 - 2*x_2*x_3 - 2*x_2*x_5 -
  16*x_2*y_5 + 8*y_2^2,  
    y_3*y_6 + 4*x_1*x_2 - 3*x_1*x_5 + 2*x_1*x_6 - 4*x_1*y_1 - 3*x_1*y_2
  + 2*x_1*y_3 - 6*x_1*y_4 + 2*x_1*y_6 + 3*y_1^2 + 5*y_1*y_4 +
  2*x_2*x_4,  
    y_4^2 + 12*x_1*x_2 - 24*x_1*x_3 - 14*x_1*x_4 + 3*x_1*x_5 -
  12*x_1*y_1 + 29*x_1*y_2 - 4*x_1*y_3 - 24*x_1*y_4 - 4*x_1*y_5 -
  x_1*y_6 - 5*y_1^2 + y_1*y_2 + 16*y_1*y_3 - 5*y_1*y_4 + 4*y_1*y_6 +
  x_2*x_3 + 2*x_2*x_5 + 12*x_2*y_5 - 6*y_2^2,  
    y_4*y_5 - 2*x_1*x_2 + x_1*x_3 + 2*x_1*y_1 - x_1*y_2 + 3*x_1*y_4 -
  2*y_1^2 + y_1*y_5 - x_2*x_4 - x_2*y_5,  
    y_4*y_6 - 3*x_1*x_2 + 10*x_1*x_3 + 10*x_1*x_4 + 3*x_1*y_1 -
  10*x_1*y_2 + 10*x_1*y_4 + 2*x_1*y_5 - 10*y_1*y_3 - y_1*y_5 -
  2*y_1*y_6 - 2*x_2*x_5 - 5*x_2*y_5 + 4*y_2^2,
    y_5^2 + 6*x_1*x_2 - 12*x_1*x_3 - 7*x_1*x_4 - 6*x_1*y_1 + 12*x_1*y_2
  - 12*x_1*y_4 + 2*x_1*y_5 + 8*y_1*y_3 - 2*x_2*x_5 + 6*x_2*y_5 -
  3*y_2^2,  
    y_5*y_6 - 80*x_1*x_2 + 160*x_1*x_3 + 96*x_1*x_4 - 2*x_1*x_5 +
  80*x_1*y_1 - 162*x_1*y_2 + 9*x_1*y_3 + 160*x_1*y_4 + 28*x_1*y_5 +
  2*x_1*y_6 + 2*y_1^2 + 10*y_1*y_2 - 108*y_1*y_3 + 2*y_1*y_4 -
  28*y_1*y_6 - 7*x_2*x_3 - 14*x_2*x_5 - 80*x_2*y_5 + 40*y_2^2,  
    y_6^2 - 24*x_1*x_2 + 48*x_1*x_3 + 28*x_1*x_4 + x_1*x_5 + 24*x_1*y_1
  - 41*x_1*y_2 + 5*x_1*y_3 + 48*x_1*y_4 + 8*x_1*y_5 - 2*x_1*y_6 -
  7*y_1^2 - 3*y_1*y_2 - 32*y_1*y_3 - 5*y_1*y_4 - 8*y_1*y_6 - 2*x_2*x_3
  - 4*x_2*x_5 - 24*x_2*y_5 + 12*y_2^2 };

-- Verify the number of relations

#rels -- output: 55

-- Verify that the relations generate the kernel of phi

ideal rels == ker phi -- output: true

R = Q/(ideal rels)

-- Verify that the Hilbert function of R is as expected

apply(6,d -> hilbertFunction(d,R)) -- output: {1, 23, 23, 24, 24, 24}

-- Set up the ideals in R corresponding to t and at
 
X = ideal(x_1); 
Y = ideal(y_1);

-- Verify the condition on these ideals from Huneke-Iyengar-Wiegand

(X:Y)*(Y:X)  == intersect(X:Y,Y:X) -- output: true

-- Notice the relation between the colon ideals observed in proof of 3.2

mingens (X:Y) -- output: | x_6 x_5 x_4 x_3 x_2 x_1 |
mingens (Y:X) -- output: | y_6 y_5 y_4 y_3 y_2 y_1 |
\end{verbatim}}

\section{Codimension two}
\label{sec:codim2}

\noindent
In this section we verify that conjecture of Huneke and Wiegand holds
when, in addition, the Gorenstein ring $R$ is of codimension at most
2; see Proposition~\ref{prop:codim2} below. A key input in the
argument is a result of Neubauer and
Saltman~\cite{Neubauer/Saltman:1994}, and we begin by recasting it
form that is convenient for our application.

\begin{proposition}
  \label{prop:BS}
  Let $k$ be a field and $A$ a finitely dimensional commutative
  $k$-algebra. Assume the $k$-algebra $A$ is two-generated. For any
  finitely generated faithful $A$-module $M$, there are inequalities
  \[
    \rank_k A\le \rank_k M \le \rank_k \End_A(M)\,.
  \]
  Moreover, if either inequality is an equality, then both become
  equalities.
\end{proposition}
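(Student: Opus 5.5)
Here $A=k[x,y]/\mathfrak a$ is a quotient of a polynomial ring in two variables, and a faithful module $M$ is the same as a pair of commuting $k$-linear operators $X,Y$ on $M$ whose joint annihilator in $k[x,y]$ is exactly $\mathfrak a$. I plan to reduce both inequalities to the theorem of Neubauer and Saltman~\cite{Neubauer/Saltman:1994}. In the form I intend to use it, the theorem says: for any commutative subalgebra $B\subseteq \End_k(M)$ generated by two elements, one has $\rank_k B\le \rank_k M$, and if equality holds then $B$ is maximal commutative and equals its own centralizer, $C(B)=B$. I will apply this with $B$ equal to the image of $A$ in $\End_k(M)$. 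Since $M$ is faithful, $A\to \End_k(M)$ is injective, so $B\cong A$ is two-generated and commutative. This gives the first inequality $\rank_k A\le \rank_k M$ directly.

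For the second inequality, note that $\End_A(M)$ is exactly the centralizer $C(B)$ of $B$ in $\End_k(M)$. I need the lower bound $\rank_k C(B)\ge \rank_k M$. My plan is to use the Gerstenhaber/Neubauer--Saltman circle of ideas. First extend scalars to $\bar k$; this changes none of the ranks, and faithfulness and two-generation persist. Then choose a maximal commutative subalgebra $C'$ of $C(B)$ containing $B$ that is generated by two elements. Showing $\rank_k C(B)\ge\rank_k M$ is the step I expect to be the main obstacle. I would first try the following route, which is the one Neubauer--Saltman use. The centralizer of a pair of commuting matrices contains a commutative subalgebra of dimension $\dim M$: by the irreducibility of the commuting variety, $(X,Y)$ is a limit of pairs with a common cyclic vector, and the centralizer of such a pair has dimension $\ge n$. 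By upper semicontinuity of centralizer dimension, $\dim C(X,Y)\ge n$. So $\rank_k\End_A(M)=\dim C(X,Y)\ge n=\rank_k M$. If instead I take the theorem as a black box, this step is the statement that the centralizer of two commuting matrices has dimension at least $n$, which follows from the irreducibility of the commuting variety.

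For the equality clause I argue in two directions.

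Suppose first that $\rank_k A=\rank_k M$. Then Neubauer--Saltman give $C(B)=B$, hence $\rank_k\End_A(M)=\rank_k A=\rank_k M$, and all three ranks coincide.

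Conversely, suppose $\rank_k M=\rank_k\End_A(M)$. I would argue that $M$ is then cyclic over $A$, or dually cocyclic. Either condition forces $M\cong A$, or $M\cong \Hom_k(A,k)$, by faithfulness, and so $\rank_k A=\rank_k M$. To get there, I would use the characterization in~\cite{Neubauer/Saltman:1994} that $\dim C(X,Y)=n$ holds exactly when the pair generates a maximal commutative subalgebra of dimension $n$. From this it follows that $C(B)$ is commutative of dimension $n$ and contains $B$, and the two-generated maximal case gives $C(B)=B$. Hence $\rank_k A=n$. I would double-check this last implication against the precise statement in~\cite{Neubauer/Saltman:1994}, since it is the point where I am least sure the cited form matches.
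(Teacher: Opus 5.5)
Your proposal is correct and follows essentially the paper's route: view $A$ as the subalgebra of $\End_k(M)$ generated by two commuting operators, identify $\End_A(M)$ with its centralizer, and take the inequalities and both directions of the equality clause from Neubauer--Saltman. The paper cites their Theorem~1.1 and Lemmas~1.2, 1.3 for exactly this, including the implication $\rank_k\End_A(M)=\rank_k M\Rightarrow\rank_k A=\rank_k M$ that you flagged as uncertain. Your semicontinuity argument on the irreducible commuting variety is a valid self-contained proof of $\rank_k\End_A(M)\ge\rank_k M$. The two-generated maximal commutative $C'$ and the cyclic/cocyclic detour are unjustified and unused, so you can simply drop them.
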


There is an analogue of this result, due to Schur and Jacobson, for
arbitrary $A$ but that involves a term that is quadratic in
$\rank_kM$; see \cite{Cowsik:1993} for an elegant proof of this
result. However, this does not seem to help for applications to the
conjecture of Huneke and Wiegand.

\begin{proof}
  The hypothesis that $A$ is two-generated and $M$ is a faithful
  $A$-module is tantamount to saying that there are commuting
  $k$-linear operators $x,y\colon M\to M$ such that $A\cong k[x,y]$,
  the $k$-subalgebra of $\End_k(M)$ generated by $x,y$. Since
  $\End_A(M)$ is the $k$-subalgebra of $\End_k(M)$ consisting of
  operators that commute with both $x$ and $y$, it remains to apply
  \cite[Theorem~1.1, Lemmas~1.2 \& 1.3]{Neubauer/Saltman:1994}.
\end{proof}

Here is the avowed result about rigid ideals. We write $\edim A$ to
denote the embedding dimension of a local ring $A$.

\begin{proposition}
  \label{prop:codim2}
  Let $(R,\fm,k)$ be a Gorenstein local domain of Krull dimension
  one. Assume $\edim R\le 3$ and that there exists an element $u$ in
  $\fm\setminus \fm^2$ such that the artinian ring $R/Ru$ contains
  $k$.
    
  Every rigid ideal in $R$ is principal; equivalently, free.
\end{proposition}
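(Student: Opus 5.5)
The plan is to reduce modulo the element $u$ and then apply Proposition~\ref{prop:BS} to the artinian ring $A\coloneqq R/Ru$ and the module $M\coloneqq I/uI$. Let $I$ be a nonzero rigid ideal. Since $R$ is a one-dimensional domain, $I$ is torsion-free, hence maximal Cohen--Macaulay, and $u$ is a nonzerodivisor on $I$ and on $R$. Because $u\notin\fm^2$, the ring $A$ has embedding dimension at most $2$. Since $A$ is artinian local and contains $k$, $k$ serves as a coefficient field. So $A$ is a quotient of $k[[x,y]]$ by an $\fm$-primary ideal, hence a two-generated finite-dimensional $k$-algebra with $\rank_k A=\ell(A)$.

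\emph{Step 1 (rigidity lifts endomorphisms).} Apply $\Hom_R(I,-)$ to $0\to I\xrightarrow{u} I\to I/uI\to 0$. Using $\Ext^1_R(I,I)=0$ and $\Hom_R(I,I/uI)\cong\Hom_A(M,M)$, this gives $\End_R(I)/u\End_R(I)\cong \End_A(M)$.

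\emph{Step 2 (length count).} $S\coloneqq\End_R(I)$ is a finitely generated torsion-free $R$-module of rank one: it sits between $R$ and the integral closure. The same holds for $I$. For a rank-one torsion-free module $N$ over a one-dimensional local domain, $\ell(N/uN)=e(u;N)=e(u;R)=\ell(R/uR)$. Hence
\[
  \rank_k M=\rank_k A=\rank_k \End_A(M).
\]

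\emph{Step 3 (faithfulness).} Faithfulness of $M$ over $A$ does not need to be checked directly. Let $A'\coloneqq A/\operatorname{ann}_A(M)$, which is still two-generated and acts faithfully on $M$, with $\End_{A'}(M)=\End_A(M)$. Proposition~\ref{prop:BS} gives $\rank_k A'\le\rank_k M=\rank_k\End_{A'}(M)$. Equality on the right then forces $\rank_k A'=\rank_k M=\rank_k A$, so $\operatorname{ann}_A(M)=0$. Thus $M$ is faithful and we are in the equality case of Proposition~\ref{prop:BS}.

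\emph{Step 4 (cyclicity, the main obstacle).} The hardest step is to deduce from this equality case that $M$ is cyclic. The first thing I would try is to extract from \cite[Lemmas~1.2 \& 1.3]{Neubauer/Saltman:1994} that equality $\rank_k A=\rank_k M$ for faithful $M$ forces $M$ to be cyclic or cocyclic, that is, $M\cong A$ or $M\cong\Hom_k(A,k)$. Since $A$ is Gorenstein ($R$ is Gorenstein and $u$ is regular), $\Hom_k(A,k)\cong A$, so in either case $M\cong A$.

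If that extraction fails, I would argue via $\End_A(M)$. It contains the image of $A$, which is isomorphic to $A$ by faithfulness and has the same $k$-rank, so $\End_A(M)=A$ is commutative. Then I would show that a faithful module of length $\ell(A)$ over a Gorenstein artinian ring with $\End_A(M)=A$ is free. One route is to compare $M$ with $\Hom_A(M,A)$ via the socle of $A$: faithfulness gives an element of $M$ not killed by the socle.

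\emph{Step 5 (conclusion).} Once $M\cong A$, the module $I/uI$ is cyclic, and since $u\in\fm$, Nakayama's lemma makes $I$ cyclic, i.e.\ principal. A nonzero principal ideal in a domain is free.
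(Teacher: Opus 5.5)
Your Steps 1--3 and 5 are the paper's argument. You reduce modulo $u$, use $\Ext^1_R(I,I)=0$ to identify $\End_A(M)$ with $\End_R(I)/u\End_R(I)$, and count lengths of rank-one torsion-free modules to get $\rank_k M=\rank_k A=\rank_k\End_A(M)$. You then apply Proposition~\ref{prop:BS} to $A/\operatorname{ann}_A(M)$ (the paper's $\overline{S}$) to force $M$ to be faithful. Proposition~\ref{prop:BS} also covers algebras generated by fewer than two elements, so your version absorbs the case $\edim R\le 2$, which the paper handles by citing Huneke--Wiegand.

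The gap is Step~4. You call it the main obstacle and leave it as a list of things you would try. It is the standard fact the paper uses without comment (``since $S$ is Gorenstein, it suffices to verify that $M$ is faithful with $\rank_kS=\rank_kM$''), and your own hint proves it once carried one step further:
\begin{itemize}
\item Since $A$ is an artinian Gorenstein local ring, its socle is spanned by a single element $s$, and $s$ lies in every nonzero ideal of $A$.
\item Faithfulness gives $m\in M$ with $sm\neq 0$. So $\operatorname{ann}_A(m)$ is an ideal not containing $s$, hence is zero.
\item Thus $a\mapsto am$ embeds $A$ in $M$, and $\rank_kA=\rank_kM$ makes this an isomorphism $A\cong M$.
\end{itemize}
Neither $\End_A(M)=A$ nor a comparison with $\Hom_A(M,A)$ is needed. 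Drop the first route as well: it rests on an unverified reading of Neubauer--Saltman and is superfluous here.
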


\begin{proof}
  The result is well-known when $\edim R\le 2$; see \cite[Theorem
  3.1]{Huneke/Wiegand:1994}. In the rest of the proof we assume
  $\edim R=3$.

  Let $I$ be a rigid ideal in $R$. Set $S\coloneqq R/Ru$ and
  $M\coloneqq I/Iu$. Our hypothesis on means that $S$ is a local
  $k$-algebra, of embedding dimension two.

  To verify $I$ is free, it suffices to verify that the $S$-module $M$
  is free; see, for instance, \cite[??]{Bruns/Herzog:1998a}. To that
  end, since the ring $S$ is Gorenstein, it suffices to verify that
  $M$ is a faithful $S$-module, with $\rank_kS=\rank_SM$.

  Applying $\Hom_R(I,-)$ to the exact sequence of $R$-modules
  \[
    0\longrightarrow I\xrightarrow{\ u\ } I\longrightarrow
    M\longrightarrow 0
  \]
  yields the exact sequence of $R$-modules
  \[
    0\longrightarrow \End_R(I)\xrightarrow{\ u\ }\End_R(I)
    \longrightarrow \Hom_R(I,M)\longrightarrow \Ext^1_R(I,I)=0
  \]
  The equality holds because $I$ is rigid. Observing that
  $\Hom_R(I,M)\cong \End_S(M)$, one deduces from the sequence above
  that
  \[
    \End_S(M)\cong \frac{\End_R(I)}{\End_R(I)u}
  \]
  as $S$-modules. This justifies the first equality below
  \begin{equation}
    \label{eq:rankM}
    \rank_k \End_S(M) = \rank_k \left(\frac{\End_R(I)}{\End_R(I)u} \right) = \rank_kS = \rank_kM\,.
  \end{equation}
  The second and third equalities holds because $\End_R(I)$ and $I$
  are both $R$-modules of rank one, and $R$ is a domain of Krull
  dimension one; see, for instance, \cite{Bruns/Herzog:1998a}.

  Let $\overline{S}$ be the image of $S$ in $\End_S(M)$ under the
  natural map $S\to \End_S(M)$. Then $\overline{S}$ is also a
  commutative local $k$-algebra, generated by two elements. Thus
  Proposition~\ref{prop:BS} yields inequalities
  \[
    \rank_k \overline{S}\le \rank_k M \le \rank_k\End_S(M)\,,
  \]
  and also that if either inequality is an equality, then both
  are. Given \eqref{eq:rankM} it follows that there are equalities
  \[
    \rank_k S =\rank_k \overline{S} = \rank_k \End_S(M)\,.
  \]
  We deduce that the natural map is an isomorphism:
  $S\xrightarrow{\cong} \End_S(M)$, in particular, that $M$ is a
  faithful $S$-module.
\end{proof}

\begin{remark}
  A Gorenstein ring $R$ satisfying $\edim R-\dim R\le 2$ is complete
  intersection. Thus the ring $R$ in Proposition~\ref{prop:codim2} is
  a complete intersection. It remains an open question whether the
  conjecture of Huneke and Wiegand holds when $R$ is, in addition, a
  complete intersection.
\end{remark}

\bibliographystyle{amsplain} \newcommand{\noopsort}[1]{}
\providecommand{\bysame}{\leavevmode\hbox to3em{\hrulefill}\thinspace}
\providecommand{\MR}{\relax\ifhmode\unskip\space\fi MR }
\providecommand{\MRhref}[2]{%
  \href{http://www.ams.org/mathscinet-getitem?mr=#1}{#2} }
\providecommand{\href}[2]{#2}

\end{document}